\numberwithin{equation}{section}
\theoremstyle{plain}
\newtheorem{Th}{Theorem}[section]
\newtheorem{Prop}[Th]{Proposition}
\theoremstyle{definition}
\newtheorem{Def}[Th]{Definition}
\newtheorem{Rem}[Th]{Remark}
\newtheorem{?}[Th]{Problem}
\newtheorem*{Claim}{Claim}
\newtheorem*{Fact}{Fact}
\DeclareMathOperator{\Hom}{Hom}
\DeclareMathOperator{\Aut}{Aut}
\DeclareMathOperator{\gal}{Gal}
\DeclareMathOperator{\im}{Im}
\DeclareMathOperator{\Ker}{Ker}
\DeclareMathOperator{\charar}{char}
\newcommand\reallywidehat[1]{%
	\savestack{\tmpbox}{\stretchto{%
			\scaleto{%
				\scalerel*[\widthof{\ensuremath{#1}}]{\kern-.6pt\bigwedge\kern-.6pt}%
				{\rule[-\textheight/2]{1ex}{\textheight}}
			}{\textheight}%
		}{0.5ex}}%
	\stackon[1pt]{#1}{\tmpbox}%
}
\title{On the First Cohomology of Local Units}
\author{Wei Yin}
\date{}
\begin{document}

	\maketitle
	\begin{abstract}
		The groups of units $U^i_L$ of a local field $L$ play an important role in algebraic number theory, especially in class field theoretic topics. Therefore, it is interesting to study these groups from a cohomological point of view. In this article, we study and compute the first cohomology of $U_L^1$, $U^2_L$ and $U^3_L$ under certain mild hypotheses, and discuss some results about general $U^i_L$'s.
	\end{abstract}
	
	\tableofcontents
	\subsubsection*{Acknowledgements}
	\thispagestyle{empty}
	
	The author is grateful to Prof. Dr. Nickel for his inspiring discussions and warm encouragement during the pandemic.
	
	\clearpage
	\section{Notations and basic results}
		\subsection{Local fields}
			In this section, we summarize the notations and results that will be used in this article. Serre's book \cite{locf} is our main source, where all the results in this section can be found.
	
			Throughout this article, $K$ denotes a complete field with a discrete valuation $\nu_{K}$. $K$ is endowed with the topology defined by $\nu_{K}$. The corresponding valuation ring is denoted by $\mathcal{O}_{K}\coloneqq\{x\in K| \nu_{K}(x)\geqslant0\}$, and its maximal ideal by $\mathfrak{m}_{K}$. Its residue field is $\kappa=\mathcal{O}_{K}/\mathfrak{m}_{K}$, and $U_{K}=\mathcal{O}_{K}-\mathfrak{m}_{K}$ is the group of units. We assume that the residue field is finite with positive characteristic $p>0$. 
	
			Let $L$ be a finite Galois extension of $K$ with $\gal(L/K)=G$, and let $\mathcal{O}_{L}$ be the integral closure over $\mathcal{O}_{K}$ in L. We know that $\mathcal{O}_{L}$ is also a complete discrete valuation ring, with a valuation $\nu_{L},$ its maximal ideal $ \mathfrak{m}_{L},$, the residue field $\lambda$ and the group of units $U_{L}$. With the assumption above, the residue field extension is an extension of finite fields. The ramification index is denoted by $e$ and the degree of the residue field extension is denoted by $f$. We have $ef=[L:K]$. Here, we will always make the convention that $\nu_{L}(L^{\times})=\mathbb{Z}$ and $\nu_{L}(K^{\times})=e\mathbb{Z}$. We fix a choice of uniformizer of $L$ and denote it by $\pi_L$, so $\nu_{L}(\pi_L)=1$. 
			\subsection{Unit groups and ramification groups}
			For $i\geqslant0$, we denote by $U^{i}_{L}$ the elements $x\in\mathcal{O}_{L}$ such that $x\equiv1\mod\pi_L^{i}$. In other words,
			 $U_{L}^{i}=1+\mathfrak{m}_{L}^{i}$. By convention, we set $U^{0}_{L}=U_{L}$. These groups $U_{L}^{i}$ give a decreasing sequence of closed subgroups of $U_L$, and they form a neighborhood base of $1$ for the topology induced on $U_L$ by $L^{\ast}$. Further, we have the following result:
			
			 $$U_{L}\cong\varprojlim\limits_{i\geqslant 0} U_{L}/U_{L}^{i}.$$
	 		\begin{Rem}
	 			In fact we have for any $k$, 
	 			$$U_{L}^{k}\cong\varprojlim\limits_{j\geqslant k}U_{L}^{k}/U_{L}^{j}$$
	 		\end{Rem}
			There is a filtration for the Galois group $G$. Let $i\geqslant-1$ be an integer. Define
				$$ G_i\coloneqq\{\sigma\in G|\  \nu_{L}\big(\sigma(a)-a\big)\geqslant i+1 \ \text{for any}\  a\in\mathcal{O}_{L}\}.$$
	
			These groups are called ramification groups and $G_i$ is called the $i$-th ramification group of $G$. We have the following well-known result:
	
			\begin{Prop}
			The $G_i$ form a decreasing sequence of normal subgroups of $G$. $G_{-1}$ is the full Galois group $G$, and $G_{0}$ is the inertia subgroup of $G$. Moreover, $G/G_0$ is canonically isomorphic to $\gal(\lambda/\kappa)$.
			\end{Prop}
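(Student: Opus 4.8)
The entire proposition rests on one structural fact, which I would isolate and prove first: \emph{every $\sigma\in G$ is an isometry for $\nu_L$}, i.e. $\nu_L(\sigma(x))=\nu_L(x)$ for all $x\in L^{\times}$. Since $\sigma$ fixes $K$ pointwise, the map $x\mapsto\nu_L(\sigma(x))$ is again a discrete valuation on $L$ with value group $\mathbb{Z}$ whose restriction to $K$ coincides with $\nu_K$. Because $K$ is complete and $L/K$ is finite, the extension of the valuation from $K$ to $L$ is unique, so $\nu_L\circ\sigma=\nu_L$. In particular $\sigma$ preserves $\mathcal{O}_L=\{x:\nu_L(x)\geqslant0\}$ and $\mathfrak{m}_L=\{x:\nu_L(x)\geqslant1\}$, hence restricts to a ring automorphism of $\mathcal{O}_L$ fixing $\mathcal{O}_K$. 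This isometry property is the one genuine input; everything afterwards is ultrametric bookkeeping.

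Granting this, the easy assertions are immediate from the definition. The sequence is decreasing because $\nu_L(\sigma(a)-a)\geqslant i+1$ forces $\nu_L(\sigma(a)-a)\geqslant i$, so $G_i\subseteq G_{i-1}$. For $i=-1$ the defining condition reads $\nu_L(\sigma(a)-a)\geqslant0$, which holds for every $\sigma\in G$ and every $a\in\mathcal{O}_L$ since $\sigma(a)-a\in\mathcal{O}_L$; hence $G_{-1}=G$.

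Next I would check that each $G_i$ is a normal subgroup, invoking the isometry property at every step. For closure under multiplication, write $\sigma\tau(a)-a=\sigma\bigl(\tau(a)-a\bigr)+\bigl(\sigma(a)-a\bigr)$; applying the isometry to the first summand and then the ultrametric inequality gives $\nu_L(\sigma\tau(a)-a)\geqslant i+1$ whenever $\sigma,\tau\in G_i$. For inverses, setting $b=\sigma^{-1}(a)$ (which ranges over $\mathcal{O}_L$ as $a$ does) yields $\sigma^{-1}(a)-a=b-\sigma(b)$, so $\nu_L(\sigma^{-1}(a)-a)=\nu_L(\sigma(b)-b)\geqslant i+1$. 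For normality, with $\sigma\in G_i$, $\tau\in G$, and $b=\tau^{-1}(a)$, one computes $\tau\sigma\tau^{-1}(a)-a=\tau\bigl(\sigma(b)-b\bigr)$, whose valuation equals $\nu_L(\sigma(b)-b)\geqslant i+1$ by the isometry property; hence $\tau\sigma\tau^{-1}\in G_i$.

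Finally, to identify $G_0$ with the inertia subgroup I would reinterpret its defining condition on the residue field: $\nu_L(\sigma(a)-a)\geqslant1$ for all $a\in\mathcal{O}_L$ says exactly that $\sigma$ induces the identity on $\lambda=\mathcal{O}_L/\mathfrak{m}_L$, i.e. $\sigma$ lies in the kernel of the reduction homomorphism $G\to\operatorname{Gal}(\lambda/\kappa)$, and this kernel is by definition the inertia subgroup. The main obstacle is genuinely the first paragraph, since pinning down that Galois automorphisms are isometries rests on the uniqueness of the valuation extension over a complete base; once that is in hand, the subgroup, normality, and residue-field statements are all short computations with the ultrametric inequality.
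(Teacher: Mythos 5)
Your proof is correct and complete. The paper itself gives no proof of this proposition --- it is quoted as standard background from Serre's \emph{Local Fields} --- so there is no internal argument to compare against. Your decomposition is sound: the isometry property $\nu_L\circ\sigma=\nu_L$, deduced from uniqueness of the valuation extension over the complete base $K$, is indeed the one genuine input, and your identities $\sigma\tau(a)-a=\sigma\bigl(\tau(a)-a\bigr)+\bigl(\sigma(a)-a\bigr)$ and $\tau\sigma\tau^{-1}(a)-a=\tau\bigl(\sigma(\tau^{-1}(a))-\tau^{-1}(a)\bigr)$ are exactly what is needed for closure and normality. For comparison, the argument in Serre (on which the paper implicitly relies) packages these computations differently: once one knows that each $\sigma$ preserves $\mathcal{O}_L$ and every ideal $\mathfrak{m}_L^{i+1}$, one observes that $G_i$ is precisely the kernel of the natural homomorphism $G\to\operatorname{Aut}\bigl(\mathcal{O}_L/\mathfrak{m}_L^{i+1}\bigr)$; being a kernel, $G_i$ is automatically a normal subgroup, the decreasing property is immediate from compatibility of the quotient maps, and your final paragraph identifying $G_0$ with $\ker\bigl(G\to\operatorname{Gal}(\lambda/\kappa)\bigr)$ becomes the $i=0$ instance of the general description rather than a separate argument. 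That formulation buys brevity and conceptual uniformity; your version buys self-containedness by making the ultrametric mechanics explicit. The mathematical content is the same, and your write-up has no gaps.
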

			
			Next, we consider the relationship between unit groups and ramification groups.
		 	We first recall that there is a canonical isomorphism $$\rho_0:U_L/U^1_L\xrightarrow{\sim}\lambda^{\times},$$ 
		 	given by 
		 	$$\rho_0(u)=\widehat{u},$$
		 	where $\widehat{u}$ is the image of $u$ in the residue field $\lambda$.
		 	Also, for $i\geqslant1,$ there are  non-canonical isomorphisms  $$\rho_i:U^i_L/U^{i+1}_L\xrightarrow{\sim}\lambda$$ 
		 	such that if $x=1+u\cdot\pi_L^i$, then
		 	$$\rho_i(x\cdot U_L^{i+1})=\widehat{u}.$$
		 	When $i\geqslant1$, these $\rho_i$'s are not canonical because their definition involves a choice of the uniformizer $\pi_L$. What is worse, $\rho_i$ is only an isomorphism of abelian groups, not an isomorphism of $G$-modules. We will investigate this issue more closely in Section 4.
		 	
			Next, we recall the following two propositions cited from Serre's book \cite{locf}.
			
			\begin{Prop}\label{serre1}
				Let $i$ be a non-negative integer and $\sigma$ be an element in $G_0$. In order for $\sigma$ to belong to $G_i$, it is sufficient and necessary that
					$$\dfrac{\sigma(\pi_L)}{\pi_L}\equiv1\mod \mathfrak{m}_{L}^{i}.$$
				 In other words, $\sigma\in G_i$ if and only if $\dfrac{\sigma(\pi_L)}{\pi_L}\in U_{L}^{i}$. 
			\end{Prop}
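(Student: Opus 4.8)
The plan is to collapse the universally quantified condition ``$\nu_L(\sigma(a)-a)\geqslant i+1$ for all $a\in\mathcal{O}_L$'' that defines $G_i$ down to a single test against the uniformizer $\pi$, exploiting the hypothesis $\sigma\in G_0$ in an essential way. First I would record the elementary translation of the target condition: since $\nu_L(\pi)=1$, we have $\sigma(\pi)/\pi\in U_L^{i}=1+\mathfrak{m}_L^{i}$ if and only if $\nu_L\!\big(\sigma(\pi)/\pi-1\big)\geqslant i$, which after clearing the denominator is exactly $\nu_L(\sigma(\pi)-\pi)\geqslant i+1$. So the whole proposition is equivalent to the statement that, for $\sigma\in G_0$, one has $\sigma\in G_i$ iff $\nu_L(\sigma(\pi)-\pi)\geqslant i+1$.

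The necessity direction is immediate and does not even use $\sigma\in G_0$: if $\sigma\in G_i$, then the defining inequality applied to the particular element $a=\pi\in\mathcal{O}_L$ gives $\nu_L(\sigma(\pi)-\pi)\geqslant i+1$, hence $\sigma(\pi)/\pi\in U_L^{i}$.

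For sufficiency I would bring in the structural input that makes $\pi$ a universal test element. Let $L_0$ be the maximal unramified subextension of $L/K$. Because the residue fields are finite, the residue extension $\lambda/\kappa$ is separable, so $L_0$ exists, $L/L_0$ is totally ramified, and the inertia group satisfies $G_0=\operatorname{Gal}(L/L_0)$. Under these hypotheses $\pi$ satisfies an Eisenstein polynomial over $\mathcal{O}_{L_0}$ and one gets the crucial generation statement $\mathcal{O}_L=\mathcal{O}_{L_0}[\pi]$. I expect this step --- verifying (or precisely citing) that $\mathcal{O}_L$ is monogenic over $\mathcal{O}_{L_0}$ via $\pi$, together with the identification $G_0=\operatorname{Gal}(L/L_0)$ --- to be the main obstacle, since the remaining argument is purely formal manipulation of valuations.

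Granting this, take an arbitrary $a\in\mathcal{O}_L$ and write $a=\sum_{k}c_k\pi^{k}$ with $c_k\in\mathcal{O}_{L_0}$. Since $\sigma\in G_0=\operatorname{Gal}(L/L_0)$ fixes $L_0$ pointwise, we have $\sigma(c_k)=c_k$, so
$$\sigma(a)-a=\sum_{k}c_k\big(\sigma(\pi)^{k}-\pi^{k}\big).$$
The factorization $\sigma(\pi)^{k}-\pi^{k}=(\sigma(\pi)-\pi)\sum_{j=0}^{k-1}\sigma(\pi)^{j}\pi^{\,k-1-j}$ shows $\nu_L\!\big(\sigma(\pi)^{k}-\pi^{k}\big)\geqslant\nu_L(\sigma(\pi)-\pi)$, and then the ultrametric inequality together with the assumption $\nu_L(\sigma(\pi)-\pi)\geqslant i+1$ yields $\nu_L(\sigma(a)-a)\geqslant i+1$ for every $a\in\mathcal{O}_L$. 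Hence $\sigma\in G_i$, which closes the argument.
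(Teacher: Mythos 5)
The paper gives no proof of this proposition at all---it is quoted directly from Serre's \emph{Local Fields}---so the only meaningful comparison is with the cited source: your argument is correct, complete, and is essentially Serre's own proof, namely the trivial necessity direction via $a=\pi$, and sufficiency by reducing to the inertia field $L_0=L^{G_0}$, using that $L/L_0$ is totally ramified with $\mathcal{O}_L=\mathcal{O}_{L_0}[\pi]$, and running the telescoping factorization and ultrametric estimate. The one fact you cite rather than prove---monogenicity of $\mathcal{O}_L$ over $\mathcal{O}_{L_0}$ via the uniformizer, valid here because the finite residue extension is separable---is exactly the standard input (Serre, Ch.~I, \S 6, Prop.~18), so nothing essential is missing.
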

		
			Since for any $\sigma\in G$, the fraction $\dfrac{\sigma(\pi_L)}{\pi_L}\in U_L$ is a unit, and we may consider the map $f_{\pi}:G\to U_L$ defined by $$f_{\pi}(\sigma)=\dfrac{\sigma(\pi_L)}{\pi_L}.$$ We know from Proposition \ref{serre1} that $f_{\pi}(G_{i})\subseteq U_{L}^{i}$.
			For each index $i$, we may consider the restriction of $f_\pi$ on $G_i$, and compose it with the quotient map 
			$$G_i\xrightarrow{f_\pi} U_{L}\xrightarrow{} U_{L}^{i}/U_{L}^{i+1}.$$
			We shall denote by $\theta_{i}$ this composed map $$\theta_{i}: G_i\to U_{L}^{i}/U_{L}^{i+1}.$$ 
			Due to the above proposition, $\theta_{i}$ factors through $G_{i+1}$. By abuse of notations, we will also denote by $\theta_i$ the induced map:
			$$\theta_i: G_i/G_{i+1}\rightarrow U_{L}^{i}/U_{L}^{i+1}.$$  
			
			We rephrase 
			\begin{Prop}\label{serreprop}
				Let $i$ be a non-negative integer. The map which assigns $\dfrac{\sigma(\pi)}{\pi}$ to $\sigma\in G_i$, induces by passage to the quotient an isomorphism $\theta_{i}$ of the quotient group $G_i/G_{i+1}$ onto a subgroup of the group $U_{L}^{i}/U_{L}^{i+1}$. The isomorphism is independent of the choice of the uniformizer. In particular, the map $\theta_{i}: G_i/G_{i+1}\to U_{L}^{i}/U_{L}^{i+1}$ is injective.
			\end{Prop}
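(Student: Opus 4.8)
The plan is to treat $f_\pi(\sigma) = \sigma(\pi)/\pi$ as a map $G_i \to U_L^i$ (the containment in $U_L^i$ being Proposition \ref{serre1}), to reduce it modulo $U_L^{i+1}$, and to verify that the reduction is a group homomorphism whose kernel is exactly $G_{i+1}$; the first isomorphism theorem then delivers the injective $\theta_i$ with image a subgroup, after which a short computation handles independence of the choice of $\pi$.

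The computational heart is the identity
$$f_\pi(\sigma\tau) = \frac{\sigma\tau(\pi)}{\pi} = \sigma\!\left(\frac{\tau(\pi)}{\pi}\right)\cdot\frac{\sigma(\pi)}{\pi} = \sigma\!\left(f_\pi(\tau)\right)\cdot f_\pi(\sigma),$$
which exhibits $f_\pi$ as a crossed homomorphism rather than an honest one. To repair this I would prove the key lemma that $G_i$ acts trivially on $U_L^i/U_L^{i+1}$: writing $f_\pi(\tau) = 1 + x$ with $x \in \mathfrak{m}_L^i$, the defining inequality $\nu_L(\sigma(a) - a) \geqslant i+1$ for $\sigma \in G_i$ gives $\sigma(x) - x \in \mathfrak{m}_L^{i+1}$, and since $1+x$ is a unit this forces $\sigma(f_\pi(\tau))/f_\pi(\tau) \in U_L^{i+1}$. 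Because $U_L^i/U_L^{i+1}$ is abelian, the crossed term collapses and $f_\pi \bmod U_L^{i+1}$ becomes a genuine homomorphism $G_i \to U_L^i/U_L^{i+1}$. I expect this triviality-of-action step to be the main obstacle, although it reduces entirely to the valuation estimate just described.

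With the homomorphism in hand, I would compute its kernel: $f_\pi(\sigma) \in U_L^{i+1}$ if and only if $\sigma \in G_{i+1}$, which is precisely Proposition \ref{serre1} read at level $i+1$. The first isomorphism theorem then yields an injective homomorphism $\theta_i\colon G_i/G_{i+1} \hookrightarrow U_L^i/U_L^{i+1}$, and its image is automatically a subgroup; this single kernel computation simultaneously establishes well-definedness on the quotient and injectivity.

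It remains to show independence of the uniformizer. Any other uniformizer has the form $\pi' = u\pi$ with $u \in U_L$, and then $f_{\pi'}(\sigma) = (\sigma(u)/u)\cdot f_\pi(\sigma)$. Applying the same estimate $\sigma(u) - u \in \mathfrak{m}_L^{i+1}$ for $\sigma \in G_i$ shows $\sigma(u)/u \in U_L^{i+1}$, so that $f_{\pi'} \equiv f_\pi \pmod{U_L^{i+1}}$ and the two choices induce the identical map $\theta_i$.
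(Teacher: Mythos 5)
Your proof is correct and complete. One point of context: the paper does not prove this proposition at all---it is quoted from Serre's \emph{Local Fields}---so the natural benchmark is Serre's own argument, and yours is essentially the same. Both rest on the single valuation estimate that $\sigma(u)/u\in U_L^{i+1}$ whenever $\sigma\in G_i$ and $u$ is a unit, which you derive correctly: the defining inequality for $G_i$ gives $\sigma(u)-u\in\mathfrak{m}_L^{i+1}$, and dividing by the unit $u$ does not change the valuation. The only organizational difference lies in how multiplicativity is obtained. Serre first proves independence of the uniformizer and then applies it to the second uniformizer $\pi'=\tau(\pi)$ in the identity $\frac{\sigma\tau(\pi)}{\pi}=\frac{\sigma(\tau(\pi))}{\tau(\pi)}\cdot\frac{\tau(\pi)}{\pi}$, whereas you linearize the cocycle identity $f_\pi(\sigma\tau)=\sigma(f_\pi(\tau))\cdot f_\pi(\sigma)$ by showing that $G_i$ acts trivially on $U_L^i/U_L^{i+1}$; these amount to the same computation, applied to $u=\tau(\pi)/\pi$ rather than to $\pi'=\tau(\pi)$. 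Your identification of the kernel as $G_{i+1}$ via Proposition \ref{serre1} read at level $i+1$ is likewise the standard step, and it legitimately yields well-definedness on the quotient and injectivity in one stroke.
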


		\subsection{Invariants of the unit groups}\label{section-invariants}
		In this subsection, we consider the $G$-invariants $(U^i_L)^G$ of the groups of units $U^i_L$. 
		
		We first show that $(U_L)^G = U_K$. Let $s\in U_L$ be such that $\sigma(s)=s$ for any $\sigma\in G$. This implies that $s\in K$. From the equality $s\cdot t=1$, we draw $\sigma(t)=t$, for any $\sigma$. Therefore the inverse $t$ is also in $K$. Thus $s\in U_K$, and hence $(U_L)^G\subseteq U_K$. The other inclusion is obvious, hence we conclude that $(U_L)^G=U_K.$ 
		
		Next, we show $(U_L^1)^G=U^1_K.$ Let $s\in U^1_L$ be such, that $\sigma(s)=s$ for any $\sigma\in G$. As in the previous paragraph, this implies $s\in U_K$. Since $\nu_{L}(s-1)>0$, it follows that $\nu_K(s-1)>0$. Therefore, $s\in U^1_K.$ So, we have $(U_L^1)^G\subseteq U^1_K.$ On the other hand, one has $U^1_K\subseteq (U^1_L)^G$. Thus, we draw the equality $(U_L^1)^G=U^1_K.$
		
		However, it is not true in general that $(U_L^i)^G=U^i_K$, as the following argument shows. In fact, we have the following:
		\begin{Prop}For any $i\geqslant0$, one has:
			$$(U^i_L)^G=U_K^j,$$ 
			where $j=\lceil \dfrac{i}{e} \rceil.$
			In particular, if $L/K$ is unramified, then $(U^i_L)^G=U_K^i$ for any $i\geqslant0$.
		\end{Prop}
		\begin{proof}
			Set $i'=j\cdot e$. 
			Then, one has the obvious inequality $i'\geqslant i$. Hence, the inclusion $U^{i'}_L\subseteq U^i_L$ yields the inclusion of invariants 
			$$(U^{i'}_L)^G\subseteq (U^i_L)^G.$$
			
			On the other hand, assume $s\in(U_L^i)^G$, then, as before, we have $s\in U_K^1$. Consider $v=\nu_{L}(s-1)$ and $v'=\nu_{K}(s-1), v, v'\in\mathbb{Z}.$ Since $s\in U^i_L$, one has $v\geqslant i$.
			
			Note that one has the identity
			$$\nu_{K}(x)=\nu_{L}(x)/e,$$
			for any $x\in K$.
			This shows that $v'=v/e$. Since $v'\in\mathbb{Z},$ it follows that $e|v$. Combining this with the inequality, one has $$\dfrac{v}{e}\geqslant\dfrac{i}{e},$$
			and since $\dfrac{v}{e}$ is an integer, one has 
			$$\dfrac{v}{e}\geqslant\lceil\dfrac{i}{e}\rceil=j.$$
			Namely, $v\geqslant i'=ej$, and $s\in U^{i'}_L$. Therefore,
			$s\in (U^{i'}_L)^G$. This shows the other inclusion and we draw:
			$$(U^{i}_L)^G=(U^{i'}_L)^G.$$
			
			It remains to prove that $(U^{i'}_L)^G=U^j_K$. Indeed, suppose $s\in U^j_K$. Then we have $\nu_{K}(s-1)\geqslant j$, hence $\nu_{L}(s-1)\geqslant j\cdot e=i$, meaning $s\in U^{i'}_L$. This shows $U^j_K\subseteq(U^{i'}_L)^G.$ On the other hand, suppose $s\in(U^{i'}_L)^G$. Then $\nu_{L}(s-1)\geqslant i'=ej,$ hence $\nu_{K}(s-1)\geqslant j$, meaning $s\in U^j_K.$ Thus, $(U^{i'}_L)^G\subseteq U^j_K$. This settles the proof.
		\end{proof}
		
		As a final remark, we cite the following result from \cite{cohomology}.
		\begin{Prop}
			If the local field extension $L/K$ is unramified, then the group of units $U_L$ and the group of principal units $U^1_L$ are cohomologically trivial.
		\end{Prop}
		In fact, the proof therein implies that all the higher unit groups $U^i_L$ are cohomologically trivial if $L/K$ is unramified. 
		In this paper, we will study the behavior of the cohomology groups assuming that the local field extension $L/K$ is ramified, and it turns out that the sizes of the cohomology groups are related to the ramification indices.

	\section{First cohomology of the group of units $U_{L}$}
	It is known in \cite{nickel} and \cite{cohloc} that the first cohomology group $H^1(G,U_L)$ of $U_L$ is cyclic of order $e$. We present the proof, with an extra emphasis on the explicit 1-cocycle $f_\pi$, which generates the cohomology group and plays a fundamental role in later parts of this article.
	\subsection{The cohomology class $f_{\pi}$ }
	Let $f_{\pi} : G\rightarrow U_{L}$ be the map defined in the previous section. One sees easily that the map $f_{\pi}$ defines a 1-cocycle relation so it represents a cohomology class in $H^{1}(G,U_{L})$. Note that it is not a coboundary because $\pi$ itself is not in $U_L$, so the cohomology class is not trivial. Although the definition of $f_{\pi}$ as a 1-cocycle depends on the choice of the uniformizer $\pi$, the cohomology class does not. Indeed, let $\pi^{\prime}$ be another uniformizer of $L$, then $\pi$ and $\pi^{\prime}$ differ by a unit, i.e., $\pi^{\prime}=u\cdot\pi$ for some $u\in U_{L}$. We look at the cocycle defined by $\pi^{\prime}$:
	\begin{align*}
f_{\pi^{\prime}}(\sigma)=\frac{\sigma(\pi^{\prime})}{\pi^{\prime}}&=\frac{\sigma(u\cdot\pi)}{u\cdot\pi}\\
&=\frac{\sigma(u)}{u}\cdot\frac{\sigma(\pi)}{\pi}\\
&=f_{u}(\sigma)\cdot f_{\pi}(\sigma).
	\end{align*}
Here, the map $f_u$ defined by $f_u(\sigma)=\dfrac{\sigma(u)}{u}$ is a 1-coboundary on $U_L$, which gives the trivial cohomology class. Thus, the cohomology classes defined by $f_{\pi^{\prime}}$ and $f_{\pi}$ are the same. Namely, the cohomology class does not depend on the choice of the uniformizer $\pi_L$. By abuse of notation, we shall denote the corresponding cohomology class by $f_{\pi}$ as well, and show that it is a generator of $H^{1}(G, U_L)$. 
	
	\subsection{The cohomology group  $H^{1}(G,U_{L})$}
		\begin{Prop}
		The cohomology group $H^{1}(G,U_{L})$ is a cyclic group whose order is equal to the ramification index $e$ and is generated by the cohomology class $f_\pi$.
		\end{Prop}
	\begin{proof}
		Consider the exact sequence of $G$-modules:
		$$1\to U_{L}\to L^{\times}\xrightarrow{\nu_{L}}\mathbb{Z}\to 0,$$ where $\mathbb{Z}$ is endowed with the trivial $G$-action.
		This sequence induces a long exact sequence of cohomology:
		$$1\to (U_{L})^{G}\to (L^{\times})^{G}\xrightarrow{\nu_{L}}(\mathbb{Z})^{G}\xrightarrow{\delta}H^{1}(G,U_{L})\to H^{1}(G,L^{\times})\to \cdots.$$
		By Hilbert 90, we have $H^{1}(G,L^{\times})=0$. Hence, the first several terms give the following:
		$$1\to U_{K}\to K^{\times}\xrightarrow{\nu_{K}}\mathbb{Z}\xrightarrow{\delta}H^{1}(G,U_{L})\to 0.$$ Thus $$H^{1}(G,U_{L})\cong\mathbb{Z}/\nu_{L}(K^{\times})\cong\mathbb{Z}/e\mathbb{Z}.$$ This shows that $H^{1}(G,U_{L})$ is a cyclic group of order $3$. 
		
		Next, we look closely at the map $\delta$. It is clear that $\delta(1)$ is the generator of $H^{1}(G,U_{L})$. By definition, to obtain $\delta(1)$, one chooses an element $x\in L^{\times}$ such that $\nu_L(x)=1$. The canonical choice is $\pi_L$, then the fraction $\sigma(\pi_L)/\pi_L$ defines the 1-cocycle $\delta(1)$. This is exactly the map $f_\pi$ we defined in the previous section. As a result, we have the following:
		\begin{Fact}
			The cohomology class $\delta(1)$ is represented be the 1-cocycle $f_{\pi}$, which is defined by $$f_{\pi}(\sigma)=\dfrac{\sigma(\pi_L)}{\pi_L}$$ for any $\sigma\in G$.
		\end{Fact}
		Therefore, we conclude that $H^{1}(G,U_{L})$ is a cyclic group of order $e$, with $f_\pi$ as a generator. 
		\end{proof}

	\section{First cohomology of principal units $U_{L}^{1}$ }
	Our goal in this section is to show that $H^{1}(G,U_{L}^{1})$ is a cyclic group of order $w$, with $f_\pi^{t}=(f_\pi)^t$ as a generator. Here, $w$ and $t$ stand for the wild ramification index and tame ramification index respectively. In fact, our claim can be seen from the factorization $$U_{L}\cong U_{L}^1\times \lambda^{\times},$$ 
	and henceforth
	$$H^{1}(G,U_{L})\cong H^{1}(G,U^1_{L})\times H^{1}(G,\lambda^{\times}).$$ It follows that $ H^{1}(G,U^1_{L})$ is the $p$-component of the group $H^{1}(G,U_{L})$ and $H^{1}(G,\lambda^{\times})$ is the prime-to-$p$ component. However, we also present the following longer argument, for the method implemented there will play an important role in the next section. 
	\subsection{The preparatory step}
	 We make the following 
	 \begin{Claim}
	 	$$H^{1}(G,U_{L}^{1})=\operatorname{ker}( H^{1}(G,U_{L})\to H^{1}(G,\lambda^{\times})).$$
	 \end{Claim}
	\begin{proof}
		The exact sequence
		$$1\to  U_{L}^{1}\to  U_{L}\xrightarrow{}\lambda^{\times}\to 1.$$
		induces a long exact sequence of cohomology groups :
		$$1\to ({U_{L}^{1}})^G\to(U_{L})^{G}\xrightarrow{}(\lambda^{\times})^{G}\xrightarrow{\delta}H^{1}(G,U_{L}^{1})\to H^{1}(G,U_{L})\xrightarrow{}H^{1}(G,\lambda^{\times})\to \cdots.$$
		Rewrite it as:
		$$1\to {U_{K}^{1}}\to {U_{K}}\xrightarrow{}{\kappa^{\times}}\xrightarrow{\delta}H^{1}(G,U_{L}^{1})\to H^{1}(G,U_{L})\xrightarrow{}H^{1}(G,\lambda^{\times})\to \cdots.$$
		The map ${U_{K}}\xrightarrow{}{\kappa^{\times}}$ is the canonical projection map, and we know the map is surjective, with kernel $U_{K}^{1}$. Thus, the above long exact sequence slits into two exact sequences:
		$$1\to {U_{K}^{1}}\to {U_{K}}\xrightarrow{}{\kappa^{\times}}\to  1,$$
		and	
		$$0\to H^{1}(G,U_{L}^{1})\to H^{1}(G,U_{L})\xrightarrow{}H^{1}(G,\lambda^{\times})\to \cdots.$$
		In particular, the second sequence implies that the map $H^{1}(G,U_{L}^{1})\to H^{1}(G,U_{L})$ induced by the inclusion is injective. As a result, $H^{1}(G,U_{L}^{1})$ can be canonically identified as a subgroup of $H^{1}(G,U_{L})$. More precisely, it is the kernel of the map $ H^{1}(G,U_{L})\to H^{1}(G,\lambda^{\times})$. Note here that this map is induced by the canonical projection map $U_{L}\to \lambda^{\times}$.  This settles our first step.
	\end{proof}
	
	\subsection{Relating with $T=\Hom(G_{0}/G_1,\lambda^{\times})$}
	In this subsection, we embed $H^{1}(G,\lambda^{\times})$ into some group $T$ such that the composition $H^{1}(G,U_{L})\to T$ is easy to understand. Since $H^{1}(G,\lambda^{\times})\to T$ is injective, the composed map 
	$$H^{1}(G,U_{L})\to H^{1}(G,\lambda^{\times})\to T$$ has the same kernel as $H^{1}(G,U_{L})\to H^{1}(G,\lambda^{\times})$, which is $H^{1}(G,U_{L}^{1})$. We will compute $H^{1}(G,U_{L}^{1})$ using the composition in the next subsection.

	In order to figure out the group $T$, we make the following considerations. For $G=\operatorname{Gal}(L/K)$, the normal subgroup $G_0\subseteq G$ and the $G$-module $\lambda^{\times},$ the inflation-restriction exact sequence for this data gives:
	$$0\to H^{1}(G/G_0,(\lambda^{\times})^{G_0})\xrightarrow{\text{inf}}H^{1}(G,\lambda^{\times})\xrightarrow{\text{res}}H^{1}(G_{0},\lambda^{\times}).$$
	
	Notice that $G_0$ acts trivially on $\lambda^{\times}$, hence we have $(\lambda^{\times})^{G_0}=\lambda^{\times}$, and $H^{1}(G_{0},\lambda^{\times})=\Hom(G_{0},\lambda^{\times})$. As a result, the inflation-restriction sequence reads:
	$$0\to H^{1}(G/G_0,{\lambda^{\times}})\xrightarrow{\text{inf}}H^{1}(G,\lambda^{\times})\xrightarrow{\text{res}}\Hom(G_{0},\lambda^{\times})$$
	
	We also notice that $G/G_0\cong\operatorname{Gal}(\lambda/\kappa)$. Hence, by Hilbert 90, the first term $H^{1}(G/G_0,{\lambda^{\times}})$ is 0. For the third term, note that $\lambda^{\times}$ has cardinality coprime to $p$, while $G_1$ is a $p$-group, so any homomorphism from $G_0$ to $\lambda^{\times}$ factors through $G_0/G_1$. Namely, there is a natural isomorphism $$\Hom(G_{0},\lambda^{\times})\cong\Hom(G_{0}/G_1,\lambda^{\times}).$$ 
	
	As a result, the above sequence becomes:
	$$0\to H^{1}(G,\lambda^{\times})\xrightarrow{\text{res}}\Hom(G_{0}/G_1,\lambda^{\times}).$$
	This means that the restriction map is injective. We set $$T\coloneqq\Hom(G_{0}/G_1,\lambda^{\times}).$$ This settles our second step.
	
	\subsection{Computing the kernel of $H^{1}(G,U_{L})\to T$}
	To compute the kernel, we take the generator $f_\pi$ of the group $H^{1}(G,U_{L})$, and consider its image in $T=\Hom(G_{0}/G_1,\lambda^{\times})$. We check that the image of $f_\pi$ is $\theta_{0}$. Indeed, the first map $H^{1}(G,U_{L})\to H^{1}(G,\lambda^{\times})$ is induced from the natural projection $U_{L}\to \lambda^{\times}$, so the image of $f_\pi$ in $H^{1}(G,U_{L})$ is the cohomology class represented by the assignment $$\sigma\mapsto\reallywidehat{\dfrac{\sigma(\pi_L)}{\pi_L}}\in\lambda^{\times}.$$
	Then, the restriction map $H^{1}(G,U_{L})\xrightarrow{\text{res}}\Hom(G_{0},\lambda^{\times})$ merely narrows the domain of $f_\pi$ to the subgroup $G_0$, we see this assignment coincides with the map $\theta_{0}$ defined in the first section, under the canonical identification $U_L/U_L^1=\lambda^{\times}$.
	
	Notice that $\theta_{0}$ is an injective homomorphism from the cyclic group $G_0/G_1$ to $\lambda^{\times}$, so the order of $\theta_{0}$, as an element in the group $T=\Hom(G_{0}/G_1,\lambda^{\times})$, must $|G_0/G_1|=t$, the tame ramification index. Hence the image of $f_\pi$ in the target $T$ is of order $t$. By an easy group theory exercise, we conclude that  the kernel of $H^{1}(G,U_{L})\to T$, which is $H^{1}(G,U_{L}^{1})$, is generated by $f_\pi^{t}$. In particular, its order is $e/t=w$, the wild ramification index. This finishes our calculation.

	We conclude that:
	\begin{Prop}
		The cohomology group ${H^{1}}(G,U_{L}^{1})$ is naturally isomorphic to the subgroup of $H^{1}(G,U_{L})$ generated by $(f_\pi)^{t}$,  where $t=\dfrac{|G_0|}{|G_1|}$ is the tame ramification index. In particular, the order of ${H^{1}}(G,U_{L}^{1})$ is equal to $w=|G_1|$, the wild ramification index.
	\end{Prop}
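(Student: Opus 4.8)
The plan is to fuse the three reductions of the preceding subsections into a single order count on the cyclic group $H^1(G,U_L)$. First I would invoke Step~1: the long exact sequence attached to $1\to U_L^1\to U_L\xrightarrow{r}\lambda^\times\to1$ identifies $H^1(G,U_L^1)$ with $\ker\!\big(r^1:H^1(G,U_L)\to H^1(G,\lambda^\times)\big)$, the point being that the map on invariants $U_K\to\kappa^\times$ is already surjective (with kernel $U_K^1$), so the relevant connecting homomorphism dies and the sequence breaks off. This turns the problem into the study of a single outgoing map from $H^1(G,U_L)$, which the earlier computation of $H^1(G,U_L)$ shows to be cyclic of order $e$ with generator $\Bar{f}$.

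Next I would compose $r^1$ with the injection produced in Step~2. Applying inflation--restriction with $A=\lambda^\times$, $N=G_0$, using Hilbert~90 to kill $H^1(G/G_0,\lambda^\times)=H^1(\operatorname{Gal}(\lambda/\kappa),\lambda^\times)$, and using that $|\lambda^\times|$ is prime to $p$ while $G_1$ is a $p$-group (so every homomorphism $G_0\to\lambda^\times$ factors through $G_0/G_1$), I obtain an injection $H^1(G,\lambda^\times)\hookrightarrow T\coloneqq\Hom(G_0/G_1,\lambda^\times)$. Since this last map is injective, the composite $\eta\coloneqq\operatorname{res}\circ\,r^1:H^1(G,U_L)\to T$ has the same kernel as $r^1$, namely $H^1(G,U_L^1)$; so it suffices to compute $\ker\eta$.

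The heart of the argument is to evaluate $\eta$ on the generator. Because $r^1$ is induced by the reduction $U_L\to\lambda^\times$ and $\operatorname{res}$ simply restricts a cocycle to $G_0$, the class $\eta(\Bar{f})$ is represented by $\sigma\mapsto\reallywidehat{\frac{\sigma(\pi)}{\pi}}$ on $G_0$, which is exactly the map $\theta_0$ from Section~1. Invoking the proposition that $\theta_0$ induces an injection $G_0/G_1\hookrightarrow\lambda^\times$, and noting that $G_0/G_1$ is cyclic of order $t$, the image of a generator is an element of order $t$ in $\lambda^\times$; hence $\theta_0$ has order exactly $t$ as an element of $T$. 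I expect this identification $\eta(\Bar{f})=\theta_0$ to be the main obstacle, as it is the single place where the abstract connecting-map and restriction formalism must be reconciled with the concrete cocycle $f_\pi$; the remaining steps are formal.

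It then remains to run the elementary count. Since $H^1(G,U_L)=\langle\Bar{f}\rangle\cong\mathbb{Z}/e\mathbb{Z}$ is cyclic and $\eta$ carries its generator to an element of order $t$, the kernel consists precisely of those powers $\Bar{f}^{\,k}$ with $t\mid k$; that is, $\ker\eta=\langle\Bar{f}^{\,t}\rangle$, a cyclic group of order $e/t=w$. This is exactly the assertion that $H^1(G,U_L^1)$ is the subgroup of $H^1(G,U_L)$ generated by $\Bar{f}^{\,t}$, of order the wild ramification index $w=|G_1|$, which completes the proof.
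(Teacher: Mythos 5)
Your proposal is correct and follows essentially the same route as the paper: the identification $H^1(G,U_L^1)\cong\ker r^1$ via the long exact sequence, the inflation--restriction embedding of $H^1(G,\lambda^\times)$ into $\Hom(G_0/G_1,\lambda^\times)$, the computation $\eta(\Bar{f})=\theta_0$ of order $t$, and the final order count in the cyclic group $H^1(G,U_L)\cong\mathbb{Z}/e\mathbb{Z}$. The only difference is presentational---you assemble the three steps into one argument rather than three subsections---so there is nothing substantive to add.
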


	\section{The Galois module $\lambda_i$}
	
	\subsection{Group action twisted by 1-cocycles}
	Let $G$ be a finite group and $k$ be a field. Suppose that there is an action of $G$ on $k$ denoted by $\mu:G\times k\to k$, such that $\mu$ is induce from a group homomorphism $G\to\Aut(k)$. We denote this action by $g\ast x$ for $g\in G$ and $x\in k$. This action gives rise to an action of $G$ on $k^\times$, and by abuse of notation we shall also denote it by $\ast$. 
	
	Regarding the action of $G$ on $k^\times$, let $f:G\to k^{\times}$ be a $1$-cocycle. Then, we make the following:
	\begin{Def}
		For the data above, the twisted action $\mu_f:G\times k\to k$ of $\mu$ by $f$ is given by the following formula:
		$$\mu_f(g,a)\coloneqq f(g)\cdot\mu(g,a),$$
		or 
		$$g \ast_f a\coloneqq f(g)\cdot(g\ast a),$$
		where $\cdot$ denotes the multiplication of the field $k$. 
	\end{Def}

	\begin{Prop}
		$\mu_f : G\times k\to k$ defined above is an action of $G$ on $k$.
	\end{Prop}
	\begin{proof}		
		To show that $\mu_f$ is indeed a group action, we need to prove that 
		\begin{itemize}
			\item $e\ast_f a=a$;
			\item $(gh)\ast_f a=g\ast_f(h\ast_f a)$.
		\end{itemize} 
		
		To prove the first identity, we notice that if
		$f:G\to k^{\times}$ is a 1-cocycle, then the equality $f(e)=1$ holds. Indeed, for any $g\in G$, one has \begin{align*}
			f(g)&=f(eg)\\
				  &=f(e)\cdot(e\ast f(g))\\
				  &=f(e)\cdot f(g).
		\end{align*}
		Since $k$ is a field and $f(g)\in k^{\times}$, we can cancel $f(g)$ from the equality $f(g)=f(e)\cdot f(g) $ to get $1=f(e)$, as desired.
		
		Then, we have
		\begin{align*}
			e\ast_f a&=f(e)\cdot( e\ast a)\\
							&=1\cdot (e\ast a)\\
							&=1\cdot a=a.
		\end{align*}
		This settles the first identity.
		
		To prove the second identity $$(gh)\ast_f a=g\ast_f(h\ast_f a),$$
		we first compute the left-hand side. 
		By definition,
		\begin{align*}
			(gh)\ast_f a=&f(gh)\cdot (gh\ast a)
		\end{align*}
		On the other hand, the right-hand side is 
		\begin{align*}
			g\ast_f(h\ast_f a)=&f(g)\cdot \Big(g\ast(h\ast_f a)\Big)\\
			=&f(g)\cdot \Big(g\ast\big(f(h)\cdot (h\ast a)\big)\Big)\\
			=&f(g)\cdot \Big(g\ast f(h)\Big) \cdot \Big(g\ast (h\ast a)\Big).
		\end{align*}
		Since $f$ is a 1-cocycle, we have $$f(g)\cdot (g\ast f(h))=f(gh).$$ Therefore, the right-hand side becomes
		$$g\ast_f(h\ast_f a)=f(gh)\cdot (gh\ast a).$$
		This settles the second equality.
	\end{proof}
	We remark that the group elements  $g\in G$ can no longer be regarded as field automorphisms via the new action $\ast_f$, because the action is not multiplicative in general. Namely, we do not have $g\ast_f(a\cdot b)=(g\ast_f a)\cdot(g\ast_f b).$

	\subsection{The Galois module $U^i_L/U^{i+1}_L$}
	In this subsection, we consider the quotient $U^i_L/U^{i+1}_L$. We know that in the category of abelian groups, $U^i_L/U^{i+1}_L$ is isomorphic non-canonically to $\lambda$ when $i\geqslant1$. In this subsection, we discuss the Galois module structures rather than abelian groups. 
	
	We resume the previous notations. Let $G=\gal(L/K)$ and $i\geqslant1$. Let $s=1+u\cdot\pi_L^i$ be an element in $U^i_L$ but not in $U^{i+1}_L$. Then, for any $\sigma\in G$, we have
	\begin{align*}
		\sigma(s)=&1+\sigma(u)\cdot\sigma(\pi_L)^i\\
		=&1+\sigma(u)\cdot\dfrac{\sigma(\pi_L)^i}{\pi_L^i}\cdot \pi_L^i.
	\end{align*}
	Recall that the group isomorphism $U^{i}_L/U^{i+1}_L\xrightarrow{\sim}\lambda$ is given by the assignment 
	$s\mapsto\widehat{u}$, so we have $$\sigma(s)\mapsto\reallywidehat{\sigma(u)\cdot\dfrac{\sigma(\pi_L)^i}{\pi_L^i}}=\reallywidehat{\sigma(u)\cdot f^i_\pi(\sigma)},$$
	where $f_\pi$ is the $1$-cocycle considered in the previous sections. 
	
	From the above discussions, we see that in order for the map $U^i_L/U^{i+1}_L\to\lambda$ to be $G$-equivariant, it is necessary to give a new action of $G$ to $\lambda$ such that 
	$$\sigma\ast y=\sigma(y)\cdot\reallywidehat{f^i_\pi(\sigma)},$$ for any $y\in\lambda$.
	
	We see that the new group action on $\lambda$ is the Galois action twisted by $\reallywidehat{f^i_\pi}$, in the sense of the previous subsection, where $G\to G/G_0=\gal(\lambda/\kappa)$ is the natural quotient map. We denote this Galois module by $\lambda_i$. We also remark that when $L/K$ is unramified, then $\pi_L=\pi_K$, and $\sigma(\pi_L)/\pi_L$ is just $1$. Hence there is no need to consider twisting in the unramified case. We summarize our discussion as the following:
	\begin{Def}
		Let $\lambda$ and $G=\gal(L/K)$ be as above. We define the $G$-module $\lambda_i$, such that $\lambda_i=\lambda$ as abelian groups, and the action of $G$ on $\lambda_i$ is given by the formula 
		\begin{align*}
			\sigma\ast_i a=&\sigma(a)\cdot(\reallywidehat{f_\pi(\sigma)})^i\\
			=&\sigma(a)\cdot(\reallywidehat{\sigma(\pi_L)/\pi_L})^i,
		\end{align*}
		 for any $a\in\lambda$ and any $\sigma\in G$.  The $\ast_i$ is indeed a $G$-action because of Proposition 4.1.
 	\end{Def}
	
	We close this subsection by computing the group of invariants $(\lambda_i)^G$. 
	Let $a\in\lambda_i$ be an element which is invariant under the twisted action $\ast_i$ of $G$. Then, for any $\sigma\in G$, we have 
	$$a=\sigma\ast_i a=\sigma(a)\cdot \reallywidehat{f_\pi(\sigma)}^i.$$ 
	Since $0$ is trivially an invariant element, we may assume that $a\neq0$, hence $\sigma(a)\neq0$. Thus, the above equality can be written as
	$$\dfrac{a}{\sigma(a)}= \reallywidehat{f_\pi(\sigma)}^i.$$
	This implies that $f'=\reallywidehat{f_{\pi}}^i$ is a 1-coboundary on $\lambda^{\times}$. More precisely, this means the image of $(f_\pi)^i$ is trivial via the map $H^{1}(G,U_{L})\to H^{1}(G,\lambda^{\times})$. By the previous section, we know that the kernel of this map is exactly $H^{1}(G,U_{L})$ and is generated by $f_\pi^t$. Thus, we must have $t|i$, where $t$ is the tame ramification index.
	
	Hence, we draw the following fact:
	$$(\lambda_i)^G\neq0\implies t|i.$$
	Therefore, when $t$ does not divide $i$, we have $(\lambda_i)^G=0.$ Next, we consider the case where $t|i$. By the previous section, $f'$ is a 1-coboundary on $\lambda^{\times}$. Therefore, there exists $b\in\lambda^{\times}$ such that the following holds:
	 $$f'(\sigma)=\dfrac{\sigma(b)}{b}.$$ Hence, for any $a\in\lambda^{\times}$, we have 
	$$\sigma\ast_{i}a=\sigma(a)\cdot f'(\sigma)=\sigma(a)\cdot\dfrac{\sigma(b)}{b}.$$
	If $a$ is invariant under this action, then we have
	$$a=\dfrac{\sigma(a)\sigma(b)}{b},$$
	hence $$ab=\sigma(ab).$$
	This implies $ab\in(\lambda)^G=\kappa$, so $a\in b^{-1}\cdot\kappa,$ which is an additive subgroup of $\lambda$. We remark that this $b$ depends on the index $i$.
	
	\subsection{The first cohomology of $\lambda_i$}
	In this subsection, we consider the cohomology group $H^1(G,\lambda_i)$.
	The inflation-restriction sequence for $\lambda_i$ gives:
	$$0\to H^1(G/G_0, (\lambda_i)^{G_0})\to H^1(G, \lambda_i)\to H^1(G_0, \lambda_i)^{G/G_0}.$$
	To study these groups, we first look at the invariants $(\lambda_i)^{G_0}$. For $\sigma\in G_0$, we use the notations from the previous subsection and notice that $$\sigma\ast_{i}a=\sigma\ast_{f'}a=f'(\sigma)\cdot\sigma(a).$$
	We note that with the original Galois action, the inertia group $G_0$ acts trivially on the residue field $\lambda$, so $\sigma(a)=a$ for any $\sigma\in G_0$ and any $a\in\lambda$. Hence, the above equality becomes 
	$$\sigma\ast_{f'}a=f'(\sigma)\cdot a.$$
	Hence, if $a\in(\lambda_i)^{G_0}$, then we must have $f'(\sigma)\cdot a=a$. 
	Thus, the equality $$f'(\sigma)=\reallywidehat{f_{\pi}}^i(\sigma)=1,$$holds in $\lambda^{\times}$ for any $\sigma\in G_0$. By Proposition 1.4, this yields $t|i$. When this happens, we must have $(\lambda_i)^{G_0}=\lambda_i.$
	
	Therefore, we conclude:
	\[
	(\lambda_i)^{G_0}=
	\begin{cases*}
		0 & \text{if} $\quad t\nmid i,$ \\
		\lambda_i &\text{if} $\quad t|i$.
	\end{cases*}
	\]
	From this, it follows that 
	\[
	H^1(G/G_0, (\lambda_i)^{G_0})=
	\begin{cases*}
		0 & \text{if} $\quad t\nmid i$, \\
		H^1(G/G_0, \lambda_i) &\text{if} $\quad t|i$.
	\end{cases*}
	\]
	Our goal is to show that in the second case where $t|i$, the group $H^1(G/G_0, \lambda_i)$ is $0$. Since $\lambda_i$ is finite, its Herbrand quotient is 1. Thus, it suffices to show that the 0-th Tate cohomology group $\reallywidehat{H}^0(G/G_0, \lambda_i)$ is trivial.
	By definition, we have $$\reallywidehat{H}^0(G/G_0, \lambda_i)=\dfrac{(\lambda_i)^{G/G_0}}{N_{G/G_0}(\lambda_i)}.$$
	By the previous subsection, we know that $$(\lambda_i)^{G/G_0}=(\lambda_i)^G=b^{-1}\cdot\kappa,$$ where $b\in\lambda^{\times}$ is the element such that $$f'(\sigma)\coloneqq\reallywidehat{f_{\pi}}^i(\sigma)=\dfrac{\sigma(b)}{b}.$$ 
	 Therefore, it suffices to show that $$N_{G/G_0}(\lambda_i)=b^{-1}\cdot\kappa.$$ For simplicity, we denote by $\mathfrak{g}=G/G_0$. Note that $\mathfrak{g}=\gal(\lambda/\kappa)$, which is a cyclic group. Suppose $\tau$ is a generator of $\mathfrak{g}$, then we can compute $N_{\mathfrak{g}}(\lambda_i)$ as follows:
	
	For any element $a\in\lambda,$ we have 
	\begin{align*}
		N_{\mathfrak{g}}\ast_{i}a&=\sum\limits_{k=0}^{|\mathfrak{g}|-1}\tau^k\ast_{i}a\\
		&=\sum\limits_{k=0}^{|\mathfrak{g}|-1}f'(\tau^k)\cdot\tau^k(a)\\
		&=\sum\limits_{k=0}^{|\mathfrak{g}|-1}\dfrac{\tau^k(b)}{b}\cdot\tau^k(a)\\
		&=\sum\limits_{k=0}^{|\mathfrak{g}|-1}\dfrac{\tau^k(ba)}{b}\\
		&=\dfrac{\sum\limits_{k=0}^{|\mathfrak{g}|-1}\tau^k(ba)}{b}.
	\end{align*}
	In the above fraction, we notice that the numerator $\sum\limits_{k=0}^{|\mathfrak{g}|-1}\tau^k(ba)$ is the usual norm $\operatorname{N}_{\mathfrak{g}}(ba)$ with respect to the usual Galois action.
	When $a$ ranges through elements in $\lambda$, so does the product $ba$, since $b\neq0$ by definition. Thus, the numerator ranges through elements of $N_{\mathfrak{g}}(\lambda)$, with respect to the usual Galois action. By the normal basis theorem, the Galois module $\lambda$ is cohomologically trivial.  Thus, $H^1(\mathfrak{g}, \lambda)=0$, and it follows that $\reallywidehat{H}^0(\mathfrak{g}, \lambda)=0$ by the Herbrand quotient argument. This means $$N_{\mathfrak{g}}(\lambda)=(\lambda)^\mathfrak{g}=\kappa.$$
	 Therefore, we conclude that $$N_{\mathfrak{g}}(\lambda_i)=b^{-1}\kappa=(\lambda_i)^\mathfrak{g},$$ and this means $\reallywidehat{H}^0(G/G_0, \lambda_i)=0$, as desired. As a result, $H^1(\mathfrak{g}, (\lambda_i)^{G_0})$ is always 0, settling our claim. 
	 
	 Once we know that the group $H^1(G/G_0, (\lambda_i)^{G_0})$ is identically $0$, the inflation-restriction sequence 
	 $$0\to H^1(G/G_0, (\lambda_i)^{G_0})\to H^1(G, \lambda_i)\to H^1(G_0, \lambda_i)^{G/G_0}\to\cdots$$
	 now reads
	$$0\to H^1(G, \lambda_i)\to H^1(G_0, \lambda_i)^{G/G_0}\to\cdots$$
	In particular, the restriction map $H^1(G, \lambda_i)\xrightarrow{\text{res}} H^1(G_0, \lambda_i)$ is injective.
	
	We make some further considerations. For the quotient $G_0/G_1$ and the module $\lambda_i$, we have the inflation-restriction sequence
	$$0\to H^1(G_0/G_1, (\lambda_i)^{G_1})\to H^1(G_0, \lambda_i)\to H^1(G_1, \lambda_i)^{G/G_0}\to\cdots$$
	We note that the group $G_0/G_1$ has order $t$, which is the tame ramification index and is coprime to $p$. Thus, the abelian group $H^1(G_0/G_1, (\lambda_i)^{G_1})$ is eliminated by both $t$ and $p$, so it must be trivial. Therefore, the above sequence reads
	$$0\to H^1(G_0, \lambda_i)\to H^1(G_1, \lambda_i)^{G/G_0}\to\cdots$$
	In particular, the restriction $H^1(G_0, \lambda_i)\xrightarrow{\text{res}} H^1(G_1, \lambda_i)$ is injective. 
	
	Therefore, the restriction map $H^1(G, \lambda_i)\xrightarrow{\text{res}} H^1(G_1, \lambda_i)$ is injective as well, because it is equal to the composition of $H^1(G, \lambda_i)\xrightarrow{\text{res}} H^1(G_0, \lambda_i)$ and $H^1(G_0, \lambda_i)\xrightarrow{\text{res}} H^1(G_1, \lambda_i)$. 
	
	Next, we remark that the group $G_1$ acts trivially on $\lambda_i$, because for any $\sigma\in G_1$, $f_{\pi}(\sigma)=\dfrac{\sigma(\pi_L)}{\pi_L}\equiv1\mod\pi_L$.
	Therefore, $$\sigma\ast_{i}a=\reallywidehat{f_\pi(\sigma)}^i\cdot\sigma(a)=1\cdot\sigma(a)=\sigma(a)=a.$$
	The last equal sign is due to the fact that the subgroup $G_1\subseteq G_0$ has trivial Galois action on $\lambda$. Hence, $G_1$ indeed acts trivially on $\lambda_i$. As a result, $H^1(G_1, \lambda_i)=\Hom(G_1, \lambda)$. Namely, $H^1(G_1, \lambda_i)$ is the collection of group homomorphisms from $G_1$ to $\lambda=\lambda_i$ (as abelian groups). We summarize our discussion as the following:
	\begin{Prop}
		The restriction map $H^1(G, \lambda_i)\xrightarrow{\text{res}} H^1(G_1, \lambda_i)$ is injective, and we have $H^1(G_1, \lambda_i)=\Hom(G_1, \lambda)$. 
	\end{Prop}

	\section{Cohomology of higher unit groups}
	For simplicity, in this section we shall assume that the local fields $L/K$ are both local number fields, i.e., they are finite extensions of the field of $p$-adic numbers $\mathbb{Q}_p$. In particular, we have $\charar(L)=0.$
	\subsection{The assumption $t>i$}

	From Section 4, we have the following short exact sequence of $G$-modules:
	$$1\to U^{i+1}_L\to U^i_L\to \lambda_i\to 0.$$
	This exact sequence induces a long exact sequence 
	$$1\to(U^{i+1}_L)^{G}\to(U^i_L)^G\to(\lambda_i)^G\to H^1(G,U^{i+1}_L)\to H^1(G,U^{i}_L)\to H^1(G,\lambda_i)\to\cdots$$ 
	When $t$ does not divide $i$, we have $(\lambda_i)^G=0$. 
	In particular, this is true when $t>i$.
	Hence, in this case, the above long exact sequence yields:
	$$0\to H^1(G,U^{i+1}_L)\to H^1(G,U^{i}_L)\to H^1(G,\lambda_i)\to \cdots$$
	From this, we draw: 
	$$H^1(G,U^{i+1}_L)=\Ker(H^1(G,U^{i}_L)\to H^1(G,\lambda_i)).$$
	At the end of the previous section, we showed that the restriction map $H^1(G,\lambda_i)\xrightarrow{\text{res}} H^1(G_1,\lambda_i)$ is injective. Therefore, we may consider the composition 
	$$H^1(G,U^{i}_L)\to H^1(G,\lambda_i)\xrightarrow{\text{res}} H^1(G_1,\lambda_i),$$
	without changing the kernel: 
	\begin{align*}
		H^1(G,U^{i+1}_L)&=\Ker\Big(H^1(G,U^{i}_L)\to H^1(G,\lambda_i)\Big)\\
		&=\Ker\Big(H^1(G,U^{i}_L)\to H^1(G_1,\lambda_i)\Big).
	\end{align*}
	In particular, we have 
	\begin{Prop}
		Suppose $t$ does not divide $i$, where $t$ is the tame ramification index. Then, $H^1(G,U^{i+1}_L)$ is a subgroup of $H^1(G,U^{i}_L)$.
	\end{Prop}
	When $i=t, 2t, 3t, \cdots$. It is not easy to determine the structure of $H^1(G,U^{i+1}_L)$ and $H^1(G,U^{i}_L)$. Therefore, it is relatively easy to assume that $i$ is small. In the next two subsections, we will study the cases where $i=1,2$ to determine $U_L^2$ and $U_L^3$, under the assumption $t>i$.
	 
	 \subsection{First cohomology of $U^{2}_L$}
	 For $i=1$ and $t>i=1$, we draw the following from the previous discussions:
	 $$H^1(G,U^{2}_L)=\Ker\big(H^1(G,U^{1}_L)\to \Hom(G_1,\lambda_1)\big).$$
	 
	 In Section 3, we showed that the group $H^1(G,U^{1}_L)$ is a cyclic group of order $w$, 
	 with a generator being $(f_{\pi})^t$. 
	 We denote by $\beta$ the image of of $(f_{\pi})^t$ in $\Hom(G_1,\lambda_1)$. 
	 So $\beta$ is the assignment
	 $$\beta: \sigma\mapsto \reallywidehat{u_{\sigma,t}},$$
	where $\sigma\in G_1$, $ \reallywidehat{u_{\sigma,t}}$ is the image of $u_{\sigma,t}$ in the residue field $\lambda$, and $u_{\sigma,t}$ 
	is the element such that 
	$$\dfrac{\sigma(\pi_L^t)}{\pi_L^t}=1+u_{\sigma,t}\cdot\pi_L.$$
	
	We first remark that $\beta$ factors through the quotient group $G_1/G_2$, 
	because for elements $\sigma\in G_2$, 
	we have $\dfrac{\sigma(\pi_L)}{\pi_L}\in U^2_L$. As a result,  $$\dfrac{\sigma(\pi_L^t)}{\pi_L^t}=\Big(\dfrac{\sigma(\pi_L)}{\pi_L}\Big)^t\in U^2_L.$$
	Namely, $\beta(\sigma)=0$ for $\sigma\in G_2.$ Hence, we may regard $\beta$ as an element in the subgroup $\Hom(G_1/G_2,\lambda_1)\subseteq\Hom(G_1,\lambda_1)$. We also notice that $\beta$ is the image of the generator of $H^1(G,U^{1}_L)$, so the entire image of the map $H^1(G,U^{1}_L)\to \Hom(G_1,\lambda_1)$ is contained in the subset $\Hom(G_1/G_2,\lambda_1)$, i.e.,
	$$\im\Big(H^1(G,U^{1}_L)\to \Hom(G_1,\lambda_1)\Big)\subseteq\Hom(G_1/G_2,\lambda_1).$$
	
	We first consider the trivial case where $G_1/G_2=1$. Then, the group $\Hom(G_1/G_2,\lambda_1)$ is trivial. Thus, the above inclusion implies:
	$$\im\Big(H^1(G,U^{1}_L)\to \Hom(G_1,\lambda_1)\Big)=0,$$ and hence 
	$$\Ker\big(H^1(G,U^{1}_L)\to \Hom(G_1,\lambda_1)\big)=H^1(G,U^{1}_L).$$
	Therefore, we draw $H^1(G,U^{2}_L)=H^1(G,U^{1}_L).$

	Next, we assume that $G_1/G_2\neq1$. 
	
	Suppose, for $\sigma\in G_1$,
	\begin{equation}\label{u_sigma}
		\dfrac{\sigma(\pi_L)}{\pi_L}=1+u_{\sigma}\cdot\pi_L, 
	\end{equation}
	then we see that 
	$$\dfrac{\sigma(\pi_L^t)}{\pi_L^t}=\Big(\dfrac{\sigma(\pi_L)}{\pi_L}\Big)^t=(1+u_{\sigma}\cdot\pi_L)^t=1+t\cdot u_\sigma\cdot\pi_L+\cdots,$$
	where the omitted terms are all divisible by $(\pi_L)^2$.
	As a result, we have
	$$u_{\sigma,t}\equiv t\cdot u_{\sigma}\mod \pi_L,$$
	i.e.,
	$\reallywidehat{u_{\sigma,t}}=t\cdot\reallywidehat{u_{\sigma}}$.
	We remark that $t$ is coprime to $p=\charar(\lambda)$, so $t$ is invertible in $\lambda$.
	
	A similar calculation shows that $$\beta^p(\sigma)=p\cdot\reallywidehat{u_{\sigma,t}}=0,$$ since $p=\charar(\lambda)$. This implies that $(f_{\pi})^{pt}$ is in the kernel  of the map: 
	$$(f_{\pi})^{pt}\in H^1(G,U^{2}_L)=\Ker\Big(H^1(G,U^{1}_L)\to\Hom(G_1,\lambda_1)\Big).$$
	
	Lastly, we show that $f_{\pi}^{pt}$ generates the kernel $H^1(G,U^{2}_L)$. Notice that $H^1(G,U^{1}_L)$ is a cyclic group, and $\langle f_{\pi}^{pt}\rangle\subseteq H^1(G,U^{1}_L)$ is the unique subgroup of index $p$. Thus, to show that the kernel is $\langle f_{\pi}^{pt}\rangle$, it suffices to show that the generator $f_\pi^t$ is not in the kernel.
	
	Recall that $\beta\in\Hom(G_1/G_2,\lambda_1)\subseteq\Hom(G_1,\lambda_1)$. Consider $\gamma\in\Hom(G_1/G_2,\lambda_1)$, which is the composition $$\gamma:G_1/G_2\xrightarrow{\theta_{1}} U^1_L/U^2_L \xrightarrow{\rho_1}\lambda=\lambda_1.$$ 
	  $\gamma$ is a non-trivial homomorphism because both $\theta_{1}$ and $\rho_1$ are injective maps, hence $\gamma$ is injective. We see that $\gamma$ is the assignment
	  $$\gamma:\sigma\mapsto\reallywidehat{u_\sigma},$$
	  where $u_\sigma$ is the element in the formula (\ref{u_sigma}). Therefore, we find the following relation:
	  $$\beta(\sigma)=t\cdot\gamma(\sigma),$$
	  for any $\sigma\in G_1.$ 
	  Since $t$ in invertible in $\lambda$, $\gamma$ being injective implies that $\beta$ is also injective. Thus $\beta\neq0,$ as desired.

	Let us summarize the discussions in this subsection:
	\begin{Prop}\label{propU2}
		Suppose that $t>1$, where $t$ the tame ramification index. Then, the group $H^1(G,U^{2}_L)$ is a subgroup of $H^1(G,U^{1}_L)$. Moreover, the following hold:
		\begin{itemize}
			\item If $G_1/G_2=1$, then $H^1(G,U^{2}_L)=H^1(G,U^{1}_L)$.
			\item If $G_1/G_2\neq1$, then $H^1(G,U^{2}_L)$ is generated by $(f_\pi)^{pt}$. In particular, it is the unique subgroup of $H^1(G,U^{1}_L)$ with index $p$ and its order is $w/p$.
		\end{itemize}  
	\end{Prop}
	
	Next, we remove the working assumption $t>1$, so $t=1$.

	 \subsection{First cohomology of $U^{3}_L$}
	We now consider $i=2$ and $t>i=2$. Again, we consider the sequence of $G$-modules
	$$1\to U^{3}_L\to U^2_L\to \lambda_2\to 0,$$
	which yields 
	 $$H^1(G,U^{3}_L)=\Ker\Big(H^1(G,U^{2}_L)\to \Hom(G_1,\lambda_2)\Big).$$
	 Based on Proposition \ref{propU2}, we have to discuss whether $G_1/G_2=1$ or not.
	 \subsubsection{$G_1/G_2=1$}
	 If $G_1/G_2=1$, then we have $G_1=G_2$. 
	 As a result, we have
	  $$H^1(G_1,\lambda_2)=H^1(G_2,\lambda_2)=\Hom(G_2,\lambda_2),$$ and we shall study the map
	  $$H^1(G,U^{2}_L)\to \Hom(G_2,\lambda_2)$$
	  By Proposition \ref{propU2}, we have $H^1(G,U^{2}_L)=H^1(G,U^{1}_L)$, in particular, $H^1(G,U^{2}_L)$ is generated by $f_\pi^t.$
	  As before, we denote by $\beta$ the image of $f_\pi^t$ in the group $\Hom(G_2,\lambda_2)$. Thus, $\beta$ is the assignment 
	  $$\beta:\sigma\mapsto\reallywidehat{s_{\sigma,t}},$$
	  where $\sigma\in G_2$ and $s_{\sigma,t}$ is the element such that 
	 $$\dfrac{\sigma(\pi_L^t)}{\pi_L^t}=1+s_{\sigma,t}\cdot(\pi_L)^2.$$
	Like before, this map $\beta$ factors through the quotient $G_2/G_3$. Hence,
	$$\im\Big(H^1(G,U^{2}_L)\to \Hom(G_2,\lambda_2)\Big)\subseteq\Hom(G_2/G_3,\lambda_2)\subseteq\Hom(G_2,\lambda_2).$$
	Again, if $G_2/G_3=1$, then the group $\Hom(G_2/G_3,\lambda_2)$ is trivial. We draw:
	$$\Ker\big(H^1(G,U^{2}_L)\to \Hom(G_2,\lambda_2)\big)=H^1(G,U^{2}_L).$$
	 This means $H^1(G,U^{3}_L)=H^1(G,U^{2}_L)=H^1(G,U^{1}_L)$.
	 Moreover, we can prove the following by induction:
	 \begin{Prop}
	 	Suppose $G_1=G_2=\cdots=G_j$ for some $j\geqslant2$ and $t>j$. Then the following equality holds:
	 	$$H^1(G,U^{1}_L)=H^1(G,U^{2}_L)=\cdots H^1(G,U^{j}_L).$$
	 \end{Prop}
	
	Next, we consider the non-trivial situation where $G_2/G_3\neq1$. As before, we consider the composition
	 $$\gamma:G_2/G_3\xrightarrow{\theta_{2}} U^2_L/U^3_L \xrightarrow{\rho_2}\lambda=\lambda_2.$$ 
	 This $\gamma$ is an injective map, hence is non-zero.
	 We compare $\gamma$ with $\beta$ in the same way as the previous subsection, and the same relation
	 $\beta(\sigma)=t\cdot\gamma(\sigma)$ holds.
	The same argument then yields 
	$$H^1(G,U^{3}_L)=\langle f_\pi^{pt}\rangle.$$
	\subsubsection{$G_1/G_2\neq1$}
	When $G_1/G_2$ is non-trivial, by Proposition \ref{propU2} we know that $H^1(G,U^{2}_L)$ is generated by $f_\pi^{pt}$. Let $\beta$ be the image of $f_\pi^{pt}$ in $\Hom(G_1,\lambda_2)$. For $\sigma\in G_1$, we have 
	$$f_\pi^{pt}(\sigma)=\Big(\dfrac{\sigma(\pi_L)}{\pi_L}\Big)^{pt}.$$
	Write as before $\dfrac{\sigma(\pi_L)}{\pi_L}=1+u_\sigma\cdot\pi_L,$
	then 
	\begin{align}\label{pt-th power}
		\Big(\dfrac{\sigma(\pi_L)}{\pi_L}\Big)^{pt}&=(1+u_\sigma\cdot\pi_L)^{pt} \nonumber \\
		&=1+pt\cdot u_\sigma\cdot\pi_L+{pt\choose 2}\cdot (u_\sigma)^2\cdot(\pi_L)^2+\cdots,
	\end{align}
	where the omitted terms are divisible by $(\pi_L)^3$.
	We denote by $e_L$ the ramification index of $L/\mathbb{Q}_p$ and by $e_K$ the ramification index of $K/\mathbb{Q}_p$, so $e_L=e\cdot e_K$. Then we have $$p=s\cdot(\pi_L)^{e_L}$$ for some unit $s$ in $L$. Note that $G_1/G_2\neq1$ implies $G_1\neq1$, hence we have $$|G_1|=w\geqslant p.$$
	In particular, $e_L\geqslant p\geqslant2,$ so $e_L+1\geqslant3$.
	Thus, formula (\ref{pt-th power}) becomes 
	\begin{align*}
			\Big(\dfrac{\sigma(\pi_L)}{\pi_L}\Big)^{pt}&=1+pt\cdot u_\sigma\cdot\pi_L+{pt\choose 2}\cdot (u_\sigma)^2\cdot(\pi_L)^2+\cdots \\
			&=1+ts\cdot u_\sigma\cdot(\pi_L)^{e_L+1}+{pt\choose 2}\cdot (u_\sigma)^2\cdot(\pi_L)^2+\cdots\\
			&=1+{pt\choose 2}\cdot (u_\sigma)^2\cdot(\pi_L)^2+\cdots
	\end{align*}
	We notice that $p|{pt\choose 2}$, hence the above considerations apply to the remaining term ${pt\choose 2}\cdot (u_\sigma)^2\cdot(\pi_L)^2$. It follows that $(\pi_L)^3$ divides ${pt\choose 2}\cdot (u_\sigma)^2\cdot(\pi_L)^2$. Thus, we draw $\Big(\dfrac{\sigma(\pi_L)}{\pi_L}\Big)^{pt}\in U^3_L$.
	As a result, $$\Big(\dfrac{\sigma(\pi_L)}{\pi_L}\Big)^{pt}\mapsto0\in\lambda_2\cong U^2_L/U^3_L.$$
	This means $\beta\in\Hom(G_1,\lambda_2)$ is the trivial homomorphism, and therefore $H^1(G,U^{2}_L)\to \Hom(G_1,\lambda_2)$ is trivial. Hence, 
	$$H^1(G,U^{3}_L)=\Ker\Big(H^1(G,U^{2}_L)\to \Hom(G_1,\lambda_2)\Big)=H^1(G,U^{2}_L).$$
	We conclude:
	\begin{Prop}
		Suppose $t>2$ and $G_1/G_2\neq1$. Then, we have 
		$$H^1(G,U^{3}_L)=H^1(G,U^{2}_L)=\langle f_\pi^{pt}\rangle.$$
	\end{Prop}
	If we combine the above argument with induction, then the following holds:
	\begin{Prop}
		Suppose $G_1/G_2\neq1$. Then, for any integer $j$ such that $j\leqslant t$ and $2\leqslant j\leqslant e_L+1$, we have:
		$$H^1(G,U^{j}_L)=H^1(G,U^{2}_L)=\langle f_\pi^{pt}\rangle.$$
	\end{Prop}
	\subsection{Discussions on general situations}
	In this subsection, we discuss the general situation for $U_L^{i+1}\subseteq U_L^{i}$, where the assumption $t>i$ is removed. In Section 5.1, we already showed that $H^1(G,U^{i+1}_L)$ is a subgroup of $H^1(G,U^{i}_L)$ if $t$ does not divide $i$. In what follows, we assume $t|i$.
		
	At the beginning we had the sequence
	$$1\to U^{i+1}_L\to U^i_L\to \lambda_i\to 0.$$
	which yields a long exact sequence:
	\begin{multline}\label{longseq}
		1\to(U^{i+1}_L)^{G}\to(U^i_L)^G\to(\lambda_i)^G\to \\
		H^1(G,U^{i+1}_L)\to H^1(G,U^{i}_L)\to H^1(G,\lambda_i)\to\cdots
	\end{multline}
	If $t$ divides $i$, then $(\lambda_i)^G\neq0$, and we have found that $(\lambda_i)^G=b^{-1}\cdot\kappa$, with $b$ depending on $i$. Thus $(\lambda_i)^G$ is isomorphic to $\kappa$ as abelian groups.
	\begin{Prop}
		If the ramification index $e$ divides $i$, then $H^1(G,U^{i+1}_L)$ is a subgroup of $H^1(G,U^{i}_L)$.
	\end{Prop}
	\begin{proof}
		We have figured out that 
		$$(U^i_L)^G=U_K^{\lceil\frac{i}{e}\rceil}.$$
		When $e|i$, we have 
		${\lceil\dfrac{i+1}{e}\rceil}=\dfrac{i}{e}+1=\lceil\dfrac{i}{e}\rceil+1.$ 
		If we denote the quotient $\dfrac{i}{e}$ by $j$, then we have 
		$$(U^i_L)^G=U_K^{j}, \ \text{and}\ (U^{i+1}_L)^G=U_K^{j+1}.$$
		As a result, the long exact sequence (\ref{longseq}) becomes:
		$$1\to U_K^{j+1}\to U_K^{j}\to(\lambda_i)^G\to 
		H^1(G,U^{i+1}_L)\to H^1(G,U^{i}_L)\to H^1(G,\lambda_i)\to\cdots.$$
		The image of the map $U_K^{j}\to(\lambda_i)^G$ is isomorphic to $U_K^{j}/U_K^{j+1}\cong\kappa$. Therefore, this image has the same cardinality with $(\lambda_i)^G$. As a result, the map in the above sequence $U_K^{j}\to(\lambda_i)^G$ is surjective. The surjectivity of $U_K^{j}\to(\lambda_i)^G$ breaks the long sequence into two parts, and the second part reads:
		$$0\to H^1(G,U^{i+1}_L)\to H^1(G,U^{i}_L)\to H^1(G,\lambda_i)\to\cdots.$$
		This settles the proof.
	\end{proof}
	
	Next, we assume that $e$ does not divide $i$. We remark that this (along with $t|i$) implies $w\neq1$, which is our assumption from the beginning. Then, we have 
	$${\lceil\dfrac{i+1}{e}\rceil}=\lceil\dfrac{i}{e}\rceil.$$	
	With the same notation as above, we then have 
	$$(U^i_L)^G=(U^{i+1}_L)^G=U_K^{j}.$$
	As a result, the sequence (\ref{longseq}) now becomes 
	$$1\to U_K^{j}\to U_K^{j}\to(\lambda_i)^G\to 
	H^1(G,U^{i+1}_L)\to H^1(G,U^{i}_L)\to H^1(G,\lambda_i)\to\cdots.$$
	The map $U_K^{j}\to U_K^{j}$ is induced from the inclusion map $U_L^{i+1}\to U_L^{i}$ via taking invariants, hence $U_K^{j}\to U_K^{j}$ is the identity map in the above long exact sequence.
	Therefore, we draw:
	$$0\to(\lambda_i)^G\to 
	H^1(G,U^{i+1}_L)\to H^1(G,U^{i}_L)\to H^1(G,\lambda_i)\to\cdots.$$
	In particular, if we denote by $K_i$ the kernel of the map $H^1(G,U^{i}_L)\to H^1(G,\lambda_i)$, then the following holds:
	\begin{Prop}
		Suppose $t$ divides $i$ but $e$ does not divide $i$, then there is a short exact sequence of abelian groups:
		$$0\to(\lambda_i)^G\to H^1(G,U^{i+1}_L)\to K_i\to0.$$
	\end{Prop}
	
\end{document}